\newtheorem{theorem}{Theorem}[section]
\newtheorem{proposition}[theorem]{Proposition}
\newtheorem{corollary}[theorem]{Corollary}
\theoremstyle{definition}
\newtheorem{example}[theorem]{Example}
\numberwithin{equation}{section}
\begin{document}

\title{Cohen-Macaulay $r$-partite graphs with minimal clique cover}

\author[A. Madadi, R. Zaare-Nahandi]{Asghar Madadi, Rashid Zaare-Nahandi}
\vspace{-2cm}
\address{Asghar Madadi,  Department of Mathematics, Faculty of Sciences, University of Zanjan, P. O. Box 45195-313, Zanjan, Iran}

\email{a$\_$madadi@znu.ac.ir}

\address{Rashid Zaare-Nahandi,  Department of  Mathematics, Institute for Advanced Studies in Basic Sciences, P. O. Box 45195-1159, Zanjan, Iran}

\email{rashidzn@iasbs.ac.ir}

\subjclass[2010]{05C25, 05E40, 05E45, 13F55.}

\keywords{Cohen-Macaulay graph, $r$-partite, clique cover, perfect $r$-matching}

\begin{abstract}
In this note, we give some necessary conditions for an $r$-partite graph such that the edge ring of the graph is Cohen-Macaulay. It is proved that if $G$ is an $r$-partite Cohen-Macaulay graph which is covered by some disjoint cliques of size $r$, then the clique cover is unique.
\end{abstract} \maketitle

\section{Introduction}
Mainly, after using the notion of simplicial complexes and its algebraic interpretation by R. Stanley in 1970s to prove the upper bound conjecture for number of simplicial spheres \cite{S}, this notion has been one of the main streams of research in commutative algebra. In this stream, characterization and classification of Cohen-Macaulay simplicial complexes have been extensively studied in the last decades. It is known that Cohen-Macaulay property of a simplicial complex and its level graph are coincide. Therefore, to characterize  all simplicial complexes which are Cohen-Macaulay, is enough to characterize all graphs with this property \cite{S}.

To examine special classes of graphs, Estrada and Villarreal in \cite{EV} found some necessary conditions for bipartite graphs to be Cohen-Macaulay. Finally, Herzog and Hibi in \cite{HH} presented a combinatorial characterization for bipartite graphs equivalent to Cohen-Macaulay property of these graphs. This purely combinatorial method can not be generalized for $r$-partite graphs in general. Because, as shown in Example~\ref{Ex}, Cohen-Macaulay property may depend on characteristics of the base field. In other hand, it is shown in  \cite{Z}, the corresponding graph to a simplicial complex, such that has the same Cohen-Macaulayness property is covered by minimal possible number of cliques. In this paper, we consider $r$-partite graphs with a minimal clique cover and find a necessary condition for Cohen-Macaualayness of these graphs. More precisely, we prove that in a Cohen-Macaulay $r$-partite graph with a minimal clique cover, there is a vertex of degree $r-1$ and the cover is unique.

\section{Preliminaries}

A simple graph is an undirected graph that has no loop and multiple edge. A finite graph is denoted by $G = (V(G), E(G))$, where $V(G)$ is the set of vertices and $E(G)$ is the set of edges. Let $|V(G)|=n$. We use $[n]=\{1,2,\ldots,n\}$ as vertices of $G$. The complementary graph of  $G$  is the graph $\bar{G}$ on $[n]$ whose edge set $E(\bar{G})$ consists of those edges $\{i,j\}$ which are not in $ E(G)$.  An independent set of vertices is a set of pairwise nonadjacent vertices. An $r$-partite graph is a graph that the set of its vertices can be partitioned into $r$ disjoint subsets such that each set is independent. A subset $A \subset [n]$ is a minimal vertex cover of $G$ if (i) each edge of $G$ is incident with at least one vertex in $A$, and (ii) there is no proper subset of $A$
with  property (i). It is easy to check that any minimal vertex cover of a graph is complement set of a maximal independent set of the graph.  A graph $G$ is called unmixed (well-covered) if any two minimal vertex covers of $G$ have the same cardinality. A clique in a graph is a set of pairwise adjacent vertices, and by a $r$-clique we mean a clique of size $r$. An $r$-matching is a set of pairwise disjoint $r$-cliques and a perfect $r$-matching is an $r$-matching which covers all vertices of $G$.

Let $\omega(G)$ denote the maximum size of cliques in $G$, which is called clique number of $G$. Let $f: V(G) \to [k]$ be a
map such that if $v_1$ is adjacent to $v_2$ then $f(v_1) \neq f(v_2)$. If such a map exists, we say that $G$ is colorable
by $k$ colors. The smallest such $k$ is called chromatic number of the graph and is denoted by  $\chi(G)$. A graph $G$ is called perfect if $\omega(H)=\chi(H)$ for each induced subgraph $H$ of $G$. The class of perfect graphs plays an important role in graph theory and most of computations
in this class can be done by fast algorithms. L. Lov\'{a}sz in \cite{L} has proved that a graph is perfect if and only if its
complement is perfect. Chudnovsky et al in \cite{4} have  proved that a necessary and sufficient condition for a graph $G$ to be perfect is that $G$
does not have an odd hole (a cycle of odd length greater than 3) or an odd antihole (complement of an odd hole) as induced subgraph.

Let $G$ be a graph on $[n]$. Let $S=K[x_1,\ldots,x_n]$, the polynomial ring over a field $K$. The edge ideal $I(G)$ of $G$ is defined to be the ideal of $S$ generated by all square-free monomials $x_ix_j$ provided that $i$ is adjacent to $j$  in $G$. The quotient ring $R(G)=S/I(G)$ is called the edge ring of $G$.

Let $R$ be a commutative ring with an identity. The depth of $R$, denoted by $depth(R)$, is the largest integer $r$ such that there is a sequence $f_1,\ldots,f_r$ of elements of $R$ such that $f_i$ is not a zero-divisor in $R/(f_1,\ldots,f_{i-1})$ for all $1 \leq i \leq r$, and
$(f_1,\ldots,f_r) \neq R$. Such a sequence is a called a regular sequence. The depth is an important invariant of a ring. It is bounded by another important invariant, the Krull dimension, the length of the longest chain of prime ideals in the ring. A ring $R$ is called Cohen-Macaulay if $depth(R)=dim(R)$. A graph $G$ is called Cohen-Macaulay if the ring $R(G)$ is Cohen-Macaulay.
\begin{theorem}  \cite[Proposition 6.1.21]{V}\label{V1}
If $G$ is a Cohen-Macaulay graph, then $G$ is unmixed.
\end{theorem}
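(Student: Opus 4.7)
The plan is to combine two standard facts: (i) any Cohen--Macaulay quotient of a polynomial ring over $K$ is equidimensional, and (ii) the minimal primes of the edge ideal $I(G)$ correspond bijectively to the minimal vertex covers of $G$. Together they translate Cohen--Macaulayness of $R(G)$ into the statement that all minimal vertex covers of $G$ have the same cardinality, which is exactly unmixedness.

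First I would invoke the classical result that if $R = S/I$ is Cohen--Macaulay then every associated prime of $I$ is a minimal prime and all minimal primes have the same height, equal to $\mathrm{grade}(I)$. In particular $R(G) = S/I(G)$ is equidimensional, so every minimal prime of $I(G)$ has the same height.

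Next I would establish the combinatorial description of the minimal primes of $I(G)$. For $A \subseteq [n]$ set $P_A = (x_i : i \in A)$; each such $P_A$ is prime of height $|A|$. The goal is the primary decomposition $I(G) = \bigcap_A P_A$, where $A$ ranges over the minimal vertex covers of $G$. The inclusion $I(G) \subseteq P_A$ is immediate, since every edge $\{i,j\}$ has at least one endpoint in any cover $A$, hence $x_ix_j \in P_A$. For the reverse inclusion, because $I(G)$ is a squarefree monomial ideal it is enough to check that any squarefree monomial $\prod_{i \in B} x_i$ lying in every $P_A$ already lies in $I(G)$. Contrapositively, if $B$ is an independent set, then $[n] \setminus B$ is a vertex cover and hence contains a minimal vertex cover $A$ disjoint from $B$, so $\prod_{i \in B} x_i \notin P_A$. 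This yields the claimed intersection and shows that the minimal primes of $I(G)$ are exactly the $P_A$ with $A$ a minimal vertex cover.

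Combining the two steps, the heights of the minimal primes of $I(G)$ are precisely the cardinalities of the minimal vertex covers of $G$, and equidimensionality forces these to coincide, so $G$ is unmixed. The only step that requires any real argument is the primary decomposition in the middle paragraph; everything else is an invocation of well-known commutative-algebra facts. I expect no substantive obstacle, since for squarefree monomial ideals the primary decomposition is purely combinatorial.
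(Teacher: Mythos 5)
Your argument is correct and is essentially the standard proof of this fact, which the paper itself does not prove but cites from Villarreal's book; that proof likewise combines the primary decomposition of $I(G)$ into the face primes $P_A$ of the minimal vertex covers with the fact that a Cohen--Macaulay ring has no embedded primes and is equidimensional. The one point worth stating carefully is that equidimensionality of a Cohen--Macaulay quotient $S/I$ requires $I$ to be homogeneous (or the ring to be local) --- which holds here since $I(G)$ is a monomial ideal --- as arbitrary non-graded Cohen--Macaulay quotients of a polynomial ring need not be equidimensional.
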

A  simplicial complex $\Delta$ on $n$ vertices is a collection of subsets of $[n]$ such that the following conditions hold:
\\(i) $\{i\} \in \Delta$ for each $i \in [n]$,
\\(ii) if $E \in \Delta$ and $F \subseteq E$ then $F \in \Delta$.
\\An element of $\Delta$ is called a face and a maximal face with respect to inclusion is called a facet. The set of all facets of $\Delta$ is denoted by $\mathcal{F}(\Delta)$. The dimension of a face $F \in \Delta$ is defined to be $|F|-1$ and dimension of $\Delta$ is maximum of dimension of its faces. A simplicial complex is called pure if all of its facets have the same dimension. For more details on simplicial complexes see \cite{S}.

The clique complex of a finite graph $G$ on $[n]$ is the simplicial complex $\Delta(G)$ on $[n]$ whose faces are the cliques of $G$.
Let $\Delta$ be a simplicial complex on $[n]$. We say that $\Delta$  is shellable if its facets can be ordered as $F_1,F_2,\ldots,F_m$ such that for all $j \geq 2$ the subcomplex $(F_1,\ldots,F_{j-1}) \cap F_j$ is pure of dimension dim$F_j-1$. An order of the facets satisfying this condition is called a shelling order. To say that $F_1,F_2,\ldots,F_m$ is a shelling order of $\Delta$ is equivalent to say that for all $i$, $2 \leq i \leq m$ and all $j < i$, there exists $l \in F_i \setminus F_j$ and $k<i$ such that $F_i \setminus F_k=\{l\}$. $G$ is called shellable if $\Delta(\bar{G})$ has this property.

Let $\Delta$ be a simplicial complex on $[n]$ and $I_{\Delta}$ be the ideal of $S=K[x_1,\ldots,x_n]$ generated by all square-free monomials $x_{i_1} \cdots x_{i_t}$, provided that $\{i_1,\ldots,i_t\}$ is not a face of $\Delta$. The ring $S/I_{\Delta}$ is called the Stanley-Reisner ring of $\Delta$. A simplicial complex is called Cohen-Macaulay if its Stanley-Reisner ring is Cohen-Macaulay.
\begin{theorem} \cite[Theorem 8.2.6]{HH2}
If $\Delta$ is a pure and shellable simplicial complex, then $\Delta$ is Cohen-Macaulay.
\end{theorem}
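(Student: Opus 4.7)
The plan is to invoke Reisner's criterion, which states that $K[\Delta]$ is Cohen--Macaulay if and only if for every face $F\in\Delta$ (including $F=\emptyset$) one has $\tilde{H}_i(\mathrm{link}_\Delta(F);K)=0$ for all $i<\dim\mathrm{link}_\Delta(F)$. Thus it suffices to show that every link of a face in a pure shellable complex has vanishing reduced homology below its top dimension, which will follow if each such link has the homotopy type of a wedge of spheres in its top dimension (or is contractible).

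First I would show that pure shellability is inherited by links. Given a shelling $F_1,\ldots,F_m$ of $\Delta$ and a face $F\in\Delta$, let $F_{j_1},\ldots,F_{j_s}$ be the subsequence of facets containing $F$. Then $F_{j_1}\setminus F,\ldots,F_{j_s}\setminus F$ are the facets of $\mathrm{link}_\Delta(F)$, and using the equivalent formulation of the shelling axiom recalled in the preliminaries (for each $i$ and $j<i$ there exist $l\in F_i\setminus F_j$ and $k<i$ with $F_i\setminus F_k=\{l\}$), one verifies directly that this axiom restricts to the induced sequence in the link. Purity is inherited because every facet containing $F$ has dimension $\dim\Delta$, so every facet of $\mathrm{link}_\Delta(F)$ has dimension $\dim\Delta-|F|$.

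Second, I would show by induction on the number of facets that a pure shellable complex of dimension $d-1$ is homotopy equivalent to a wedge of $(d-1)$-spheres (or to a point). Writing $\Delta_j=\langle F_1,\ldots,F_j\rangle$, the shelling condition forces $\Delta_{j-1}\cap\overline{F_j}$ to be a pure subcomplex of the boundary $\partial F_j$ of dimension $d-2$, generated by the codimension-one faces of $F_j$ detected by the restriction set $r(F_j)=\{v\in F_j:F_j\setminus\{v\}\in\Delta_{j-1}\}$. Either this subcomplex equals $\partial F_j$, in which case attaching the closed simplex $\overline{F_j}$ along its whole boundary wedges on a $(d-1)$-sphere, or it is a proper initial segment of a shelling of $\partial F_j$ (itself shellable as the boundary of a simplex), hence collapsible, so attaching $\overline{F_j}$ leaves the homotopy type unchanged. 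Either way, the conclusion of the induction is preserved.

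Combining the two steps: every link is pure and shellable, hence has the homotopy type of a wedge of top-dimensional spheres, hence has trivial reduced homology in all degrees below its top dimension; Reisner's criterion then gives the Cohen--Macaulay conclusion. The main obstacle is the topological step verifying that a proper initial segment of a shelling of $\partial F_j$ is contractible; this requires extracting from the shelling axiom that the subcomplex generated by $\{F_j\setminus\{v\}:v\in r(F_j)\}$ collapses onto a single facet, and it is the one place where both purity and the full force of the shelling order are used.
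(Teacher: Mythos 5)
The paper does not actually prove this statement; it quotes it from Herzog and Hibi's book, where the argument is algebraic: for a pure shellable complex the Alexander dual of the Stanley--Reisner ideal has linear quotients, hence a linear resolution, and the Eagon--Reiner theorem then yields Cohen--Macaulayness. Your proposal takes the classical topological route through Reisner's criterion, and it is essentially correct. Your first step is fine: the induced order on the facets containing $F$ is a shelling of $\mathrm{link}_\Delta(F)$, because the witness facet $F_k$ with $F_i\setminus F_k=\{l\}$ and $l\notin F$ automatically contains $F$, so it survives into the subsequence. Two remarks on the second step. First, the point you flag as the main obstacle is in fact the easy part: if $r(F_j)\neq F_j$, pick $w\in F_j\setminus r(F_j)$; every generator $F_j\setminus\{v\}$ with $v\in r(F_j)$ of $\Delta_{j-1}\cap\overline{F_j}$ contains $w$, so that subcomplex is a cone with apex $w$ and hence contractible --- no appeal to shellability of $\partial F_j$ or to collapsibility is needed. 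Second, in the case $r(F_j)=F_j$, the claim that attaching $\overline{F_j}$ along $\partial F_j$ wedges on a new sphere requires the attaching map $S^{d-2}\to\Delta_{j-1}$ to be null-homotopic; this does follow from the inductive hypothesis that $\Delta_{j-1}$ is a wedge of $(d-1)$-spheres (hence $(d-2)$-connected), but you should say so, and if all you want is Reisner's criterion you can bypass homotopy type entirely by running the induction on the long exact sequence of the pair $(\Delta_j,\Delta_{j-1})$, whose relative homology is concentrated in degree $d-1$. Comparing the two routes: yours gives the stronger, field-independent homotopy-type conclusion, while the cited algebraic proof stays inside commutative algebra and extends more naturally to nonpure shellability and sequential Cohen--Macaulayness.
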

M. Estrada and R. H. Villarreal in \cite{EV} have proved that for a bipartite graph $G$ Cohen-Macaulayness and pure shellability are equivalent. This is not true  in general for $r$-partite graphs when $r>2$ (Example~\ref{Ex}).

Also in bipartite graphs, Cohen-Macaulayness does not depend on characteristics of the ground field. But again, this is not true in general as shown in the following example.
\begin{example}\label{Ex}
Let $G$ be the graph in Figure~\ref{ch}. Then, $R(G)$ is Cohen-Macaulay when the  characteristic of the ground field $K$ is zero but it is not Cohen-Macaulay in characteristic 2. Therefore the graph $G$ is not shellable (\cite{K}).

\begin{figure}
\begin{tikzpicture}[line cap=round,line join=round,>=triangle 45,x=0.8cm,y=0.8cm]
\clip(-6.6,-2.5) rectangle (14,3);
\draw (-2,2)-- (0,2.5);
\draw (-2,2)-- (0,0.5);
\draw (-2,2)-- (2,0);
\draw (-2,2)-- (2,-2);
\draw (-2,0)-- (0,0.5);
\draw (-2,0)-- (0,-1.5);
\draw (-2,0)-- (2,0);
\draw (-2,0)-- (4,2.5);
\draw (-2,0)-- (4,0.5);
\draw (-2,-2)-- (0,-1.5);
\draw (-2,-2)-- (2,2);
\draw (-2,-2)-- (2,-2);
\draw (-2,-2)-- (4,2.5);
\draw (0,2.5)-- (2,2);
\draw (0,2.5)-- (2,0);
\draw (0,2.5)-- (4,0.5);
\draw (0,0.5)-- (2,-2);
\draw (0,0.5)-- (4,2.5);
\draw (0,0.5)-- (4,0.5);
\draw (0,-1.5)-- (2,0);
\draw (0,-1.5)-- (2,-2);
\draw (0,-1.5)-- (4,0.5);
\draw (2,2)-- (4,2.5);
\draw (2,2)-- (4,0.5);
\draw (2,-2)-- (4,0.5);
\fill [color=black] (-2,0) circle (1.5pt);
\draw[color=black] (-2.29,0) node {2};
\fill [color=black] (-2,2) circle (1.5pt);
\draw[color=black] (-2.27,2) node {1};
\fill [color=black] (-2,-2) circle (1.5pt);
\draw[color=black] (-2.25,-2) node {3};
\fill [color=black] (0,0.5) circle (1.5pt);
\draw[color=black] (-0.01,0.77) node {5};
\fill [color=black] (0,2.5) circle (1.5pt);
\draw[color=black] (-0.03,2.81) node {4};
\fill [color=black] (0,-1.5) circle (1.5pt);
\draw[color=black] (-0.03,-1.16) node {6};
\fill [color=black] (2,0) circle (1.5pt);
\draw[color=black] (2.21,0) node {8};
\fill [color=black] (2,2) circle (1.5pt);
\draw[color=black] (2.04,2.29) node {7};
\fill [color=black] (2,-2) circle (1.5pt);
\draw[color=black] (2.23,-2) node {9};
\fill [color=black] (4,2.5) circle (1.5pt);
\draw[color=black] (4.26,2.5) node {10};
\fill [color=black] (4,0.5) circle (1.5pt);
\draw[color=black] (4.3,0.5) node {11};
\end{tikzpicture}
\caption{Cohen-Macaulay property depends on characteristic}\label{ch}
\end{figure}
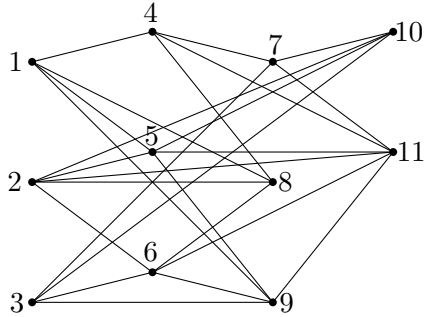

\end{example}

\section{Cohen-Macaulay property and uniqueness of perfect $r$-matching}

M. Estrada and R. H. Villarreal in \cite{EV} have proved that if $G$ is a Cohen-Macaulay bipartite graph and has at least one vertex of positive degree, then there is a vertex $v$ such that $\deg(v)=1$. By $\deg(v)$ we mean the number of vertices adjacent to $v$.
J. Herzog and T. Hibi in \cite{HH} have proved that a bipartite graph $G$ with parts $V_1$ and $V_2$ is Cohen-Macaulay if and only if, $|V_1|=|V_2|$ and there is an order on the vertices of $V$ and $W$ as $v_1,\ldots,v_n$ and $w_1,\ldots,w_n$ respectively, such that:
\\1) $v_i\sim w_i$ for $i=1,\ldots,n$,
\\2) if $v_i\sim w_j$, then $i \leq j$,
\\3) for each $1 \leq i<j<k \leq n$ if $v_i \sim w_j$ and $v_j \sim w_k$, then $v_i \sim w_k$.

R. Zaare-Nahandi in \cite{Z1} has proved that a well-covered bipartite graph $G$ is Cohen-Macaulay if and only if there is a unique perfect $2$-matching in $G$.

Let $\alpha(G)$ denote the maximum cardinality of independent sets of vertices of $G$.
Let $\mathcal{G}$ be the class of graphs such that for each $G \in \mathcal{G}$ there are $k=\alpha(G)$ cliques in $G$ covering all its vertices. For each
$G \in \mathcal{G}$ and cliques $Q_1,\ldots,Q_k$ such that $V(Q_1)\cup \cdots \cup V(Q_k)=V(G)$, we may take ${Q'}_1=Q_1$ and for $i=2,\ldots,k$, ${Q'}_i$ the
induced subgraph on the vertices $V(Q_i)\setminus (V(Q_1)\cup \cdots \cup V(Q_{i-1}))$. Then ${Q'}_1,\ldots,{Q'}_k$  are $k$ disjoint cliques covering all  vertices
of $G$. We call such a set of cliques, a basic clique cover of the graph $G$. Therefore any graph in the class $\mathcal{G}$ has a basic clique cover.

\begin{proposition}
Let $G$ be an $r$-partite, unmixed and perfect graph such that all maximal cliques are of size $r$. Then $G$ is in the class $\mathcal{G}$.
\end{proposition}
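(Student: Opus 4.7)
The plan is to combine the perfect graph theorem with the structural hypotheses ($r$-partite, unmixed, all maximal cliques of size $r$) to exhibit a partition of $V(G)$ into exactly $\alpha(G)$ cliques, each of size $r$, thereby certifying $G\in\mathcal{G}$.

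First I would exploit the $r$-partite decomposition $V(G)=V_1\cup\cdots\cup V_r$ together with the size-$r$ hypothesis on maximal cliques. Every vertex lies in some maximal clique, which by assumption has size $r$; since each $V_i$ is independent, any clique meets every part in at most one vertex, so an $r$-clique meets each $V_i$ in exactly one vertex. Hence for any $w\notin V_i$ the maximal $r$-clique through $w$ supplies a vertex of $V_i$ adjacent to $w$, which shows $V_i$ is a maximal independent set. By unmixedness, every maximal independent set has the same cardinality, so $|V_1|=\cdots=|V_r|=\alpha(G)$. Writing $m=\alpha(G)$ this gives $|V(G)|=rm$.

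Next, using that $G$ is perfect, by Lov\'asz's theorem (quoted in Section~2) the complement $\bar G$ is perfect too, and therefore
\[
\chi(\bar G)=\omega(\bar G)=\alpha(G)=m.
\]
A proper $m$-coloring of $\bar G$ is precisely a partition of $V(G)$ into $m$ cliques of $G$, producing $Q_1,\ldots,Q_m$ with $V(Q_1)\cup\cdots\cup V(Q_m)=V(G)$ and $m=\alpha(G)$, which is the defining property of $\mathcal{G}$. Moreover each $Q_i$ satisfies $|Q_i|\le\omega(G)=r$, while together they must cover $|V(G)|=rm$ vertices; this forces $|Q_i|=r$ for all $i$ and pairwise disjointness, so the $Q_i$ form a perfect $r$-matching and are already a basic clique cover.

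The step I expect to be the main obstacle is the first one: extracting from the size-$r$ hypothesis on maximal cliques in an $r$-partite graph the fact that each part $V_i$ is itself a maximal independent set, so that the unmixedness hypothesis can be applied directly to the parts and yield the crucial counting identity $|V(G)|=r\alpha(G)$ on which the rest of the argument hinges; without this identity, perfectness alone would only give clique cover number $=\alpha(G)$, not cliques of size $r$.
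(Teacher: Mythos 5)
Your proof is correct and follows essentially the same route as the paper: apply Lov\'asz's theorem to get $\bar G$ perfect, deduce $\chi(\bar G)=\omega(\bar G)=\alpha(G)$, and read a proper coloring of $\bar G$ as a partition of $V(G)$ into $\alpha(G)$ cliques. The only difference is that you prove $|V_1|=\cdots=|V_r|=\alpha(G)$ directly from unmixedness and the size-$r$ clique hypothesis (correctly), where the paper simply cites \cite{Z}, and you add a harmless counting step showing each clique in the cover has size exactly $r$.
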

\begin{proof}
Let $V_1,\ldots,V_r$ be parts of $G$.  By \cite{Z}, $|V_1|=|V_2|=\cdots=|V_r|=\alpha(G)$. Also by \cite{L}, the complement graph $\bar{G}$ is perfect. In other hand, $V_i$ is a clique of maximal size in $\bar{G}$ for each $1 \leq i \leq r$. Therefore,
$\chi(\bar{G})=\omega(\bar{G})=\alpha(G)$. This implies that $\bar{G}$ is $\alpha(G)$-partite. Therefore there are $\alpha(G)$ disjoint maximal cliques in $G$ covering all vertices.
\end{proof}
The converse of the above proposition is not true in the sense of the following example.

\begin{example}
Let $G$ be the graph in Figure~\ref{per}. Then $G$ is a graph in class $\mathcal{G}$ which is $4$-partite, unmixed and all maximal cliques are of size $4$. But the induced subgraph on  $\{A,B,C,D,E\}$ is a cycle of length $5$ and therefore, by \cite{4}, the graph $G$ is not perfect.

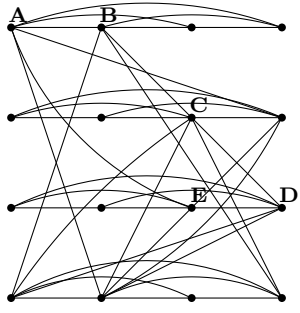
\begin{figure}

\definecolor{qqqqff}{rgb}{0,0,1}
\begin{tikzpicture}[line cap=round,line join=round,>=triangle 45,x=0.6cm,y=0.6cm]
\clip(.5,-0.5) rectangle (10.98,7.28);
\draw (3,0)-- (5,0);
\draw (5,0)-- (7,0);
\draw (7,0)-- (9,0);
\draw (3,2)-- (5,2);
\draw (5,2)-- (7,2);
\draw (7,2)-- (9,2);
\draw (3,4)-- (5,4);
\draw (5,4)-- (7,4);
\draw (7,4)-- (9,4);
\draw (3,6)-- (5,6);
\draw (5,6)-- (7,6);
\draw (7,6)-- (9,6);
\draw [shift={(6,-3)}] plot[domain=1.17:1.98,variable=\t]({1*7.62*cos(\t r)+0*7.62*sin(\t r)},{0*7.62*cos(\t r)+1*7.62*sin(\t r)});
\draw [shift={(6,-4)}] plot[domain=1.11:2.03,variable=\t]({1*6.71*cos(\t r)+0*6.71*sin(\t r)},{0*6.71*cos(\t r)+1*6.71*sin(\t r)});
\draw [shift={(6,-5)}] plot[domain=1.03:2.11,variable=\t]({1*5.83*cos(\t r)+0*5.83*sin(\t r)},{0*5.83*cos(\t r)+1*5.83*sin(\t r)});
\draw [shift={(6,-2)}] plot[domain=1.21:1.93,variable=\t]({1*8.54*cos(\t r)+0*8.54*sin(\t r)},{0*8.54*cos(\t r)+1*8.54*sin(\t r)});
\draw [shift={(5,-1)}] plot[domain=1.29:1.85,variable=\t]({1*7.28*cos(\t r)+0*7.28*sin(\t r)},{0*7.28*cos(\t r)+1*7.28*sin(\t r)});
\draw [shift={(5,-2)}] plot[domain=1.25:1.89,variable=\t]({1*6.32*cos(\t r)+0*6.32*sin(\t r)},{0*6.32*cos(\t r)+1*6.32*sin(\t r)});
\draw [shift={(5,-3)}] plot[domain=1.19:1.95,variable=\t]({1*5.39*cos(\t r)+0*5.39*sin(\t r)},{0*5.39*cos(\t r)+1*5.39*sin(\t r)});
\draw [shift={(5,-4)}] plot[domain=1.11:2.03,variable=\t]({1*4.47*cos(\t r)+0*4.47*sin(\t r)},{0*4.47*cos(\t r)+1*4.47*sin(\t r)});
\draw [shift={(7,-1)}] plot[domain=1.29:1.85,variable=\t]({1*7.28*cos(\t r)+0*7.28*sin(\t r)},{0*7.28*cos(\t r)+1*7.28*sin(\t r)});
\draw [shift={(7,-2)}] plot[domain=1.25:1.89,variable=\t]({1*6.32*cos(\t r)+0*6.32*sin(\t r)},{0*6.32*cos(\t r)+1*6.32*sin(\t r)});
\draw [shift={(7,-3)}] plot[domain=1.19:1.95,variable=\t]({1*5.39*cos(\t r)+0*5.39*sin(\t r)},{0*5.39*cos(\t r)+1*5.39*sin(\t r)});
\draw [shift={(7,-4)}] plot[domain=1.11:2.03,variable=\t]({1*4.47*cos(\t r)+0*4.47*sin(\t r)},{0*4.47*cos(\t r)+1*4.47*sin(\t r)});
\draw (3,6)-- (5,0);
\draw (3,6)-- (9,4);
\draw (5,6)-- (7,4);
\draw (7,4)-- (9,2);
\draw (5,6)-- (3,0);
\draw (5,6)-- (9,0);
\draw (7,4)-- (9,0);
\draw (9,4)-- (7,2);
\draw (7,2)-- (5,0);
\draw (5,0)-- (9,2);
\draw (9,2)-- (3,0);
\draw (7,4)-- (5,0);
\draw [shift={(14.9,-7.9)}] plot[domain=2.16:2.56,variable=\t]({1*14.28*cos(\t r)+0*14.28*sin(\t r)},{0*14.28*cos(\t r)+1*14.28*sin(\t r)});
\draw [shift={(1,8)}] plot[domain=5.18:5.82,variable=\t]({1*8.94*cos(\t r)+0*8.94*sin(\t r)},{0*8.94*cos(\t r)+1*8.94*sin(\t r)});
\draw [shift={(9.19,8.19)}] plot[domain=3.48:4.37,variable=\t]({1*6.57*cos(\t r)+0*6.57*sin(\t r)},{0*6.57*cos(\t r)+1*6.57*sin(\t r)});
\begin{scriptsize}
\fill [color=black] (3,0) circle (1.5pt);
\fill [color=black] (5,0) circle (1.5pt);
\fill [color=black] (7,0) circle (1.5pt);
\fill [color=black] (9,0) circle (1.5pt);
\fill [color=black] (3,2) circle (1.5pt);
\fill [color=black] (5,2) circle (1.5pt);
\fill [color=black] (7,2) circle (1.5pt);
\draw[color=black] (7.16,2.28) node {\bf E};
\fill [color=black] (9,2) circle (1.5pt);
\draw[color=black] (9.16,2.28) node {\bf D};
\fill [color=black] (3,4) circle (1.5pt);
\fill [color=black] (5,4) circle (1.5pt);
\fill [color=black] (7,4) circle (1.5pt);
\draw[color=black] (7.16,4.28) node {\bf C};
\fill [color=black] (9,4) circle (1.5pt);
\fill [color=black] (3,6) circle (1.5pt);
\draw[color=black] (3.16,6.28) node {\bf A};
\fill [color=black] (5,6) circle (1.5pt);
\draw[color=black] (5.16,6.28) node {\bf B};
\fill [color=black] (7,6) circle (1.5pt);
\fill [color=black] (9,6) circle (1.5pt);
\end{scriptsize}
\end{tikzpicture}
\caption{A graph in class $\mathcal G$ which is not perfect}\label{per}
\end{figure}

\end{example}

Let $H$ be a graph and $v$ be a vertex of $H$. Let $N(v)$ be the set of all vertices of $H$ adjacent to $v$.
\begin{theorem} \cite[Proposition 6.2.4]{V}\label{V2}
If $H$ is Cohen-Macaulay and $v$ is a vertex of $H$, then $H \setminus (v, N(v))$ is Cohen-Macaulay.
\end{theorem}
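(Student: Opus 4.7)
The plan is to translate the statement into simplicial-complex language and invoke the principle that links of faces in a Cohen-Macaulay complex are themselves Cohen-Macaulay. Let $\Delta = \Delta(\bar H)$ be the clique complex of the complement $\bar H$, whose faces are precisely the independent sets of $H$. Since $I(H)$ is the Stanley-Reisner ideal of $\Delta$, the Cohen-Macaulay hypothesis on $H$ is exactly the statement that $\Delta$ is a Cohen-Macaulay simplicial complex.

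Next I would identify the link of the vertex $v$ in $\Delta$. A face $F \in \Delta$ with $v \notin F$ satisfies $F \cup \{v\} \in \Delta$ if and only if $F$ is an independent set of $H$ disjoint from $N(v)$; since $H' := H \setminus (\{v\} \cup N(v))$ is an induced subgraph, such $F$ are exactly the independent sets of $H'$. Hence
\[
\mathrm{link}_{\Delta}\{v\} \;=\; \Delta\bigl(\overline{H'}\bigr),
\]
whose Stanley-Reisner ring is $R(H')$.

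Finally I would invoke Reisner's criterion: $\Delta$ is Cohen-Macaulay over $K$ if and only if $\tilde H_i(\mathrm{link}_{\Delta} F;K) = 0$ for every $F \in \Delta$ and every $i < \dim \mathrm{link}_{\Delta} F$. Because links of links inside $\Delta$ are again links inside $\Delta$, this vanishing condition is inherited by $\mathrm{link}_{\Delta}\{v\}$, so the link is Cohen-Macaulay and therefore $H'$ is Cohen-Macaulay, as desired. The main obstacle is precisely this preservation of Cohen-Macaulayness under taking links; an equivalent algebraic route proceeds through the short exact sequence
\[
0 \longrightarrow S/(I(H):x_v) \xrightarrow{\,\cdot\, x_v\,} S/I(H) \longrightarrow S/(I(H), x_v) \longrightarrow 0,
\]
together with the isomorphism $S/(I(H):x_v) \cong K[x_v] \otimes_K R(H')$, but extracting Cohen-Macaulayness of the left term from Cohen-Macaulayness of the middle requires careful dimension bookkeeping via the depth lemma, which is why I prefer the topological route above.
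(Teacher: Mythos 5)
The paper does not prove this statement---it is quoted verbatim from Villarreal's \emph{Monomial Algebras} \cite[Proposition 6.2.4]{V}---and your argument is correct and is essentially the standard proof of that result: the independence complex of $H\setminus(\{v\}\cup N(v))$ is $\mathrm{link}_{\Delta(\bar H)}\{v\}$, and links of faces of a Cohen--Macaulay complex are Cohen--Macaulay (via Reisner's criterion and $\mathrm{link}_{\mathrm{link}_\Delta\{v\}}G=\mathrm{link}_\Delta(G\cup\{v\})$, or equivalently via localizing $K[\Delta]$ at $x_v$). No gaps; nothing further is needed.
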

\begin{theorem} \cite{Z} \label{Z1}
Let $G$ be an $r$-partite unmixed graph such that all maximal cliques are of size $r$. Then all parts have the same cardinality and there
is a perfect $2$-matching between each two parts.
\end{theorem}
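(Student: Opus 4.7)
The plan is to derive the theorem from two separate observations: that each part is automatically a maximal independent set, and that Hall's marriage theorem applies to any two parts.

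First I would show that each part $V_i$ is a maximal independent set. Independence is immediate from the definition of an $r$-partite graph. For maximality, take any $v \in V_j$ with $j \neq i$. Extend $\{v\}$ to a maximal clique $Q$ in $G$; by hypothesis $|Q|=r$. Since each part is independent, $Q$ can contain at most one vertex from each of the $r$ parts, so it contains exactly one. In particular $Q$ contains a vertex $u \in V_i$, and $u \sim v$ because $Q$ is a clique. Hence $v$ has a neighbor in $V_i$, so $V_i$ is maximal independent. Since complementing a maximal independent set gives a minimal vertex cover, unmixedness forces all maximal independent sets to have the same cardinality; as every $V_i$ is such a set, we conclude $|V_1|=\cdots=|V_r|=\alpha(G)$.

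Next, fix two parts $V_i$ and $V_j$ and look at the bipartite subgraph $B$ of $G$ they induce. I want to apply Hall's theorem to produce a perfect matching in $B$, which is exactly a perfect $2$-matching between $V_i$ and $V_j$. Suppose for contradiction that Hall's condition fails, so there is $S \subseteq V_i$ with $T := N(S)\cap V_j$ satisfying $|T|<|S|$. Form the set
\[
W \;=\; S \,\cup\, (V_j \setminus T).
\]
I claim $W$ is independent in $G$: there are no edges inside $S\subseteq V_i$ or inside $V_j\setminus T\subseteq V_j$ because both lie inside a part, and any edge between $S$ and $V_j\setminus T$ would force a neighbor of $S$ to lie outside $T$, contradicting the definition of $T$. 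But then
\[
|W| \;=\; |S| + |V_j| - |T| \;>\; |V_j| \;=\; \alpha(G),
\]
contradicting the fact that $\alpha(G)$ is the maximum size of an independent set. Hence Hall's condition holds and $B$ admits a perfect matching.

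The main obstacle, if any, is seeing the right contradiction in step three: one might be tempted to complete $W$ to a maximal independent set and use unmixedness, but the cleaner argument is simply that $|W|>\alpha(G)$ is already impossible. The other conceptual point worth pinning down is that maximality of every clique at size $r$ is what guarantees each $V_i$ is \emph{maximal} independent; without that, unmixedness alone would not equalize the part sizes.
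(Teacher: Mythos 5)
Your proof is correct. Note that the paper itself gives no argument for this statement --- it is imported wholesale from the reference \cite{Z} --- so there is no in-paper proof to compare against; what you have written is the natural self-contained argument. Both halves check out: every vertex lies in some maximal clique, which by hypothesis has size $r$ and therefore meets each of the $r$ parts exactly once, so each part is a \emph{maximal} independent set, and unmixedness (equal-sized minimal vertex covers, equivalently equal-sized maximal independent sets) gives $|V_1|=\cdots=|V_r|=\alpha(G)$; then a violation of Hall's condition between two parts would yield the independent set $S\cup(V_j\setminus T)$ of size $|S|+|V_j|-|T|>\alpha(G)$, which is impossible. You are also right about the two points you flag at the end: the hypothesis that \emph{all} maximal cliques have size $r$ is exactly what makes each part maximal as an independent set, and comparing $|W|$ directly with $\alpha(G)$ sidesteps any need to extend $W$ to a maximal independent set.
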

Now, we present the main theorem of this paper which is generalization of \cite[Theorem 2.4]{EV}.
\begin{theorem}\label{C}
Let $G$ be an $r$-partite graph in the class $\mathcal{G}$ such that each maximal clique is of size $r$. If $G$ is Cohen-Macaulay then there is a vertex of degree $r-1$ in $G$.
\end{theorem}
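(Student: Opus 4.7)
I propose to induct on $k=\alpha(G)$, the cardinality of a basic clique cover $Q_1,\ldots,Q_k$ of $G$, where each $|Q_i|=r$. The base case $k=1$ is immediate since $G=Q_1$ is a single $r$-clique and every vertex trivially has degree $r-1$.

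For the inductive step, suppose for contradiction that every vertex of $G$ has degree at least $r$. A key preliminary observation is the following: since every maximal clique of $G$ has size $r$, any neighbor $w$ of a vertex $v\in Q_i$ lying outside $Q_i$ produces an edge $\{v,w\}$ that extends to a maximal $r$-clique distinct from $Q_i$. Under the standing hypothesis every vertex has such an outside neighbor, so each vertex lies in at least two maximal $r$-cliques and $G$ carries many maximal $r$-cliques beyond the ones in the cover.

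For any vertex $v$, Theorem~\ref{V2} ensures that $G':=G\setminus(\{v\}\cup N(v))$ is Cohen--Macaulay (hence unmixed by Theorem~\ref{V1}); and using that any maximal independent set of $G$ containing $v$ restricts to a maximal independent set of $G'$ one obtains $\alpha(G')=k-1$. The aim is to choose $v$ so that $\{v\}\cup N(v)$ coincides with a union of cover cliques $Q_{i_1}\cup\cdots\cup Q_{i_t}$: when this holds, $G'$ is the induced subgraph on the remaining $Q_j$'s and so inherits all the hypotheses of the theorem (being $r$-partite, in $\mathcal{G}$, and with all maximal cliques of size $r$). The inductive hypothesis applied to $G'$ then supplies a vertex $u\in V(G')$ with $\deg_{G'}(u)=r-1$, and one seeks to promote this to $\deg_G(u)=r-1$ in order to contradict the standing assumption.

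The central obstacle is twofold. First, one must exhibit such an extremal vertex $v$: for a generic vertex of degree $\geq r$, the set $\{v\}\cup N(v)$ slices through several cover cliques without containing any of them fully apart from $Q_i$, and the naive $G'$ then fails to lie in $\mathcal{G}$ with $r$-sized basic cover. Overcoming this should rely on Theorem~\ref{Z1} (a perfect $2$-matching between every pair of parts) combined with the abundance of extra maximal $r$-cliques noted above, via an extremal choice of $v$ such as one minimizing $|\{j:Q_j\cap N(v)\ne\emptyset\}|$. Second, and more delicately, the lifting $\deg_{G'}(u)=r-1\;\Rightarrow\;\deg_G(u)=r-1$ is not automatic: a priori $u$ could be adjacent in $G$ to some vertex of $N(v)$ without being adjacent to $v$ itself. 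Ruling this out should again use the cover-clique structure of $\{v\}\cup N(v)$ and the perfect $2$-matching of Theorem~\ref{Z1}, since the presence of such a hidden edge would create an additional maximal $r$-clique forcing an unmixedness violation. This combined extremal-selection-plus-lifting argument is what I expect to be the technical heart of the proof.
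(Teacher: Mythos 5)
Your submission is a proof plan with the hard part left open, and unfortunately the plan itself has a structural flaw that would prevent it from being completed. You candidly identify two ``central obstacles'' (choosing $v$ so that $\{v\}\cup N(v)$ is a union of cover cliques, and lifting $\deg_{G'}(u)=r-1$ to $\deg_G(u)=r-1$) and only say what these steps ``should rely on''; those two steps are precisely the content of the theorem, so nothing is actually proved. Worse, the first step is impossible under your standing hypothesis: if $v\in Q_i$ lies in part $V_s$, then for every $j\neq i$ the clique $Q_j$ contains exactly one vertex $w_j\in V_s$, and since $V_s$ is independent, $w_j\notin\{v\}\cup N(v)$. Hence $\{v\}\cup N(v)$ can never contain a cover clique other than $Q_i$ itself, and it equals $Q_i$ only when $\deg(v)=r-1$ --- exactly the case you have assumed away. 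So no ``extremal choice'' of $v$ can make $\{v\}\cup N(v)$ a union $Q_{i_1}\cup\cdots\cup Q_{i_t}$, and the induction never gets off the ground.

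The paper's argument is not an induction on $\alpha(G)$ at all. It picks a vertex $v_{11}$ of minimum degree, first disposing of the complete multipartite case, and writes $N(v_{11})=\{v_{21},\ldots,v_{2l_2},\ldots,v_{r1},\ldots,v_{rl_r}\}$ with $l_2\leq l_i$. Applying Theorem~\ref{V2} to get that $G'=G\setminus(\{v_{11}\}\cup N(v_{11}))$ is Cohen--Macaulay, it uses unmixedness (Theorem~\ref{V1}) of $G'$ twice: once to force $l_2\geq 2$ by exhibiting two minimal vertex covers of $G'$ of different sizes when $l_2=1$, and once to force $\deg(v_{1i})=\deg(v_{11})$ for $i=1,\ldots,l_2$, i.e., all these vertices share the same neighborhood. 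It then removes the closed neighborhoods of the remaining vertices $v_{2(l_2+1)},\ldots,v_{rn}$ to obtain a Cohen--Macaulay graph $H$ whose complement is disconnected, contradicting \cite[Exercise 5.1.26]{HB}. If you want to salvage your write-up, the productive direction is this explicit two-minimal-vertex-cover comparison and the disconnected-complement endgame, not the inductive reduction.
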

\begin{proof}
By Theorem \ref{Z1} all parts have the same cardinality. So there is a positive integer $n$ such that $|V|=rn$. Assume that for all vertices
$v$ in $G$ we have $\deg(v)\geq r$. Let  $Q_i=\{x_{1i},x_{2i},\ldots ,x_{ri} \}$ for $i=1,\ldots,n$ are cliques in a basic clique cover of $G$. Without loss of generality, assume that $v_{11}$ be a vertex of the minimal degree. If $\deg(v_{11})=(r-1)n$ then $G=K_{n,n,\ldots,n}$ is a complete
$r-$partite graph. Thus $G$ is not Cohen-Macaulay by \cite[Exercise 5.1.26]{HB} and we get a contradiction. Therefore,  $r \leq \deg(v_{11}) \leq (r-1)n-1$.

Let $N(v_{11})=\{v_{21},\ldots,v_{2l_2},v_{31},\ldots,v_{3l_3},\ldots,v_{r1},\ldots,v_{rl_r}\}$. We have $\deg(v_{11})=l_2+\cdots+l_r$. Without loss of generality, we may assume that $l_2 \leq l_i$ for $i=3,\ldots,r$. Set $G'=G \setminus (\{v_{11}\},N(v_{11}))$.
The graph $G'$ is Cohen-Macaulay by  Theorem \ref{V1}. If $l_2=1$, then, there exists $3 \leq i \leq r$ such that $l_i \geq 2$. The sets
\begin{eqnarray*}
\{v_{12}, \ldots, v_{1n}, v_{22}, \ldots, v_{2n}, v_{3(l_3+1)}, \ldots, v_{3n}, \ldots,
\widehat{\newblock{(v_{i(l_i+1)},\ldots,v_{in})}},
\ldots,&& \\ v_{r(l_r+1)}, \ldots, v_{rn}\} &&
\end{eqnarray*}
and
$$\{v_{12}, \ldots, v_{1n}, v_{3(l_3+1)}, \ldots, v_{3n}, \ldots, v_{i(l_i+1)}, \ldots, v_{in}, \ldots, v_{r(l_r+1)}, \ldots, v_{rn}\}$$
are two minimal vertex covers for $G'$ and their cardinalities are not equal. Here, by $\widehat{(v_{i(l_i+1)},\ldots,v_{in})}$ we mean the vertices $v_{i(l_i+1)}, \ldots, v_{in}$ are removed from the set. This contradicts to
Cohen-Macaulayness of $G'$. Therefore, $l_2 \geq 2$.  We claim that
$$\deg(v_{1i}) = l_2 + l_3 + \cdots + l_r = \deg(v_{11}), \ \ \ \ i=1,\ldots,l_2.$$
It is enough to show that  $\deg(v_{12})=l_2+l_3+\cdots+l_r$ and analogous argument proves the claim. If $\deg(v_{12}) > l_2+l_3+\cdots+l_r$, then there is a $j_t$, $l_t+1 \leq j_t \leq n$ for some $2 \leq t  \leq r$, such that $v_{12} \sim v_{tj_t}$. Without loss of generality we assume that $t=2$.

If there is $j_2$, $l_2+1 \leq j_2 \leq n$, such that $v_{12} \sim v_{2j_2}$ then there is a minimal vertex cover for $G'$ containing the set
$$\{v_{12}, v_{1(l_2+1)}, \ldots, v_{1n}, v_{3(l_3+1)}, \ldots, v_{3n}, \ldots, v_{r(l_r+1)}, \ldots, v_{rn}\}.$$
In other hand,  $\{v_{2(l_2+1)}, \ldots, v_{2n}, \ldots, v_{r(l_r+1)}, \ldots, v_{rn}\}$ is a minimal vertex cover of $G'$.
By $l_2 \geq 2$ and Theorem~\ref{V1}, this contradicts Cohen-Macaulayness of $G'$.  Therefore $\deg(v_{12})= l_2 + l_3 + \cdots + l_r$. Thus, for all $1 \leq i \leq l_2$ we have
$N(v_{1i})=\{v_{21}, \ldots, v_{2l_2}, v_{31}, \ldots, v_{3l_3}, \ldots, v_{r1}, \ldots, v_{rl_r}\}$. Consider the graph
$H = G\setminus \big(\{v_{2(l_2+1)}, \ldots, v_{2n}, \ldots, v_{r(l_r+1)}, \ldots, v_{rn}\} \cup N(v_{2(l_2+1)}) \cup \cdots \cup N(v_{2n}) \cup \cdots \cup  N(v_{r(l_r+1)}) \cup \cdots \cup N(v_{rn}) \big)$.  By Theorem \ref{V2}, $H$ is Cohen-Macaulay but the complement of $H$ is not connected. This is a contradiction by  \cite[Exercise 5.1.26]{HB}.
\end{proof}
Theorem \ref{C} implies that the perfect $r$-matching in a Cohen-Macaulay $r$-partite graph is unique.
\begin{corollary}
Let $G$ be an $r$-partite graph in the class $\mathcal{G}$ such that all maximal cliques are of size $r$. If $G$ is Cohen-Macaulay then there is a unique perfect $r$-matching in $G$.
\end{corollary}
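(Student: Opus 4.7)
The plan is an induction on $n := |V(G)|/r$, the number of cliques in a basic cover. The base case $n = 1$ is immediate, since $G$ is then a single $r$-clique. For the inductive step, Theorem \ref{C} supplies a vertex $v$ of degree $r-1$. Since $G$ is $r$-partite, the $r-1$ neighbors of $v$ must lie one in each part different from the part of $v$, so $Q := \{v\} \cup N(v)$ is an $r$-clique; it is in fact the unique $r$-clique containing $v$, because any $r$-clique containing $v$ consists of $v$ together with $r-1$ of its neighbors and $v$ has exactly $r-1$ neighbors. Consequently every perfect $r$-matching of $G$ contains $Q$, and the perfect $r$-matchings of $G$ are in bijection (by adjoining or removing $Q$) with the perfect $r$-matchings of $G' := G \setminus (v, N(v))$.

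To apply the inductive hypothesis I must verify that $G'$ again satisfies the standing assumptions. Plainly $G'$ is $r$-partite with parts of size $n-1$; by Theorem \ref{V2}, $G'$ is Cohen-Macaulay; and deleting $Q$ from any basic clique cover of $G$ that contains $Q$ (such a cover exists, since $Q$ is the only $r$-clique through $v$ and any basic cover of $G$ uses disjoint $r$-cliques) produces a basic clique cover of $G'$ by $n-1$ disjoint $r$-cliques, so $\alpha(G') = n-1$ and $G' \in \mathcal{G}$.

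The main obstacle is to verify that every maximal clique of $G'$ has size $r$, which is what makes the induction step go through. Suppose, for contradiction, that $C$ is a maximal clique of $G'$ with $|C| = k < r$. Since every maximal clique of $G$ has size $r$, $C$ extends in $G$ to a maximal clique of size $r$, and the maximality of $C$ in $G'$ forces every extending vertex to lie in $V(Q)$. Writing $Q = \{u_1, \ldots, u_r\}$ with $u_\ell \in V_\ell$ and letting $I \subseteq [r]$ index the parts hit by $C$, this forces $C \cup \{u_j : j \in [r] \setminus I\}$ to be a maximal $r$-clique of $G$. One can then build two minimal vertex covers of $G'$ of different cardinalities around $C$ and its unique extension, in the spirit of the minimal vertex cover construction in the proof of Theorem \ref{C}, contradicting the unmixedness of $G'$ guaranteed by Theorem \ref{V1}. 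Once the maximal clique property of $G'$ is in hand, the inductive hypothesis yields a unique perfect $r$-matching of $G'$, and adjoining $Q$ gives the unique perfect $r$-matching of $G$.
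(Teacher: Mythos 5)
Your overall strategy is the same as the paper's: Theorem \ref{C} produces a vertex $v$ of degree $r-1$; since $v$ lies in some maximal clique, which has size $r$, the set $Q=\{v\}\cup N(v)$ is the unique $r$-clique through $v$ and is forced into every perfect $r$-matching; one then passes to $G'=G\setminus (v,N(v))$ and recurses. The paper's proof is exactly this peeling argument, stated in three lines with the recursion justified only by the phrase ``continuing this process.'' On the parts you carry out in full (uniqueness of $Q$, and the fact that $G'$ is Cohen-Macaulay, $r$-partite, and in $\mathcal{G}$ with $\alpha(G')=n-1$) you agree with, and are rather more careful than, the source.

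The step you yourself call the main obstacle --- showing that every maximal clique of $G'$ still has size $r$, which is needed before Theorem \ref{C} or the inductive hypothesis can be applied to $G'$ --- is exactly the point the paper passes over in silence, so you have correctly located a gap in the published argument rather than introduced a new one. However, your proposed repair is only an assertion (``one can then build two minimal vertex covers of $G'$ of different cardinalities \ldots contradicting the unmixedness of $G'$''), and as stated it cannot succeed: unmixedness of $G'$ by itself does not exclude a maximal clique of size $k<r$. For example, two disjoint triangles joined by a single edge is an unmixed (indeed Cohen-Macaulay) graph in which the joining edge is a maximal clique of size $2$, so no contradiction can be extracted from the minimal vertex covers of $G'$ alone; any argument closing this gap must bring in the unmixedness, clique cover, or Cohen-Macaulayness of the ambient graph $G$, not merely of $G'$. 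Until such an argument is supplied, your proof and the paper's are incomplete at the same place; everything else in your write-up is sound.
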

\begin{proof}
Since $G$ is in the class $\mathcal{G}$, there is a perfect $r$-matching in $G$. By Theorem \ref{C}, there is a vertex $x \in V(G)$ of degree $r-1$. Therefore, the $r$-clique in the $r$-matching which contains $x$, must be in all perfect $r$-matchings of $G$. The graph $G \setminus (\{x,N(x)\})$ is again an $r$-partite graph in the class $\mathcal{G}$ which is Cohen-Macaulay by Theorem \ref{V2}. Continuing this process, we find that the chosen perfect $r$-matching is the unique perfect $r$-matching in $G$.
\end{proof}



\begin{thebibliography}{99}
\bibitem{HB} W. Bruns and J. Herzog, \textit{Cohen-Macaulay Rings}, Cambridge University Press, 1998.
\bibitem{4} M. Chudnovsky, N. Robertson, P. Seymour and R. Thomas, The strong perfect graph theorem, \textit{Ann. of Math.},
\textbf{164} (2006), no. 1, 51–229.
\bibitem{EV} M. Estrada and R. H. Villarreal, Cohen-Macaulay bipatite graphs, \textit{Arc. Math.}, \textbf{68} (1997), no. 2, 124-128.
\bibitem{HH} J. Herzog and T. Hibi, Distributive lattices, bipartite graphs, and Alexander duality, \textit{J. Algebraic Combin.}, \textbf{22} (2005), no. 3, 289-302.
\bibitem{HH2} J. Herzog and T. Hibi, \textit{Monomial Ideals}, Springer-Verlag, 2011.
\bibitem{K} M. Katzman, Characteristic-independence of Betti numbers of graph ideals, \textit{J. Combin. Theory}, Series A \textbf{113} (2006), no. 3, 435–454.
\bibitem{L} L. Lov\'{a}sz, A Characterization of Perfect Graphs, \textit{J. Combin. Theory}, Series B \textbf{13} (1972),   95-98.
\bibitem{S} R. Stanley, \textit{Combinatorics and Commutative Algebra}, 2nd Ed., Progress in Math., Birkhauser, 1996.
\bibitem{V} R. H. Villarreal, \textit{Monomial Algebras}, Marcel Dekker, 2001.
\bibitem{Z1} R. Zaare-Nahandi, Cohen-Macauleyness of bi-partite graphs: revisited, Preprint. \href{http://arxiv.org/abs/1012.0457}{\texttt{arxiv:1012.0457v2 [math.AC]}}
\bibitem{Z} R. Zaare-Nahandi, Pure simplicial complexes and well-covered graphs, Preprint.   \href{http://arxiv.org/abs/1104.4556}{\texttt{arxiv: 1104.4556v2 [math.AC]}}
\end{thebibliography}
\end{document}